\newtheorem{Th}{Theorem}
\newtheorem{Prop}[Th]{Proposition}
\newtheorem{Co}[Th]{Corollary}
\def\mn{\mathbb{N}}
\def\mq{\mathbb{Q}}
\def\mc{\mathbb{C}}
\def\mrc{\mathbb{R}_\mathcal{C}}
\newtheorem{Def}[Th]{Definition}
   \def\mn{\mathbb{N}}
\def\Arg{\text{Arg}}
\newcommand{\ie}{\emph{i.e.}\ }
\begin{document}
\title{On computational complexity of Cremer Julia sets.}
\author{Artem Dudko\thanks{A. Dudko acknowledges the support by the National Science Centre, Poland,
grant 2016/23/P/ST1/04088 under POLONEZ programme which has received funding from the EU\;\protect\includegraphics[width=.03\linewidth]{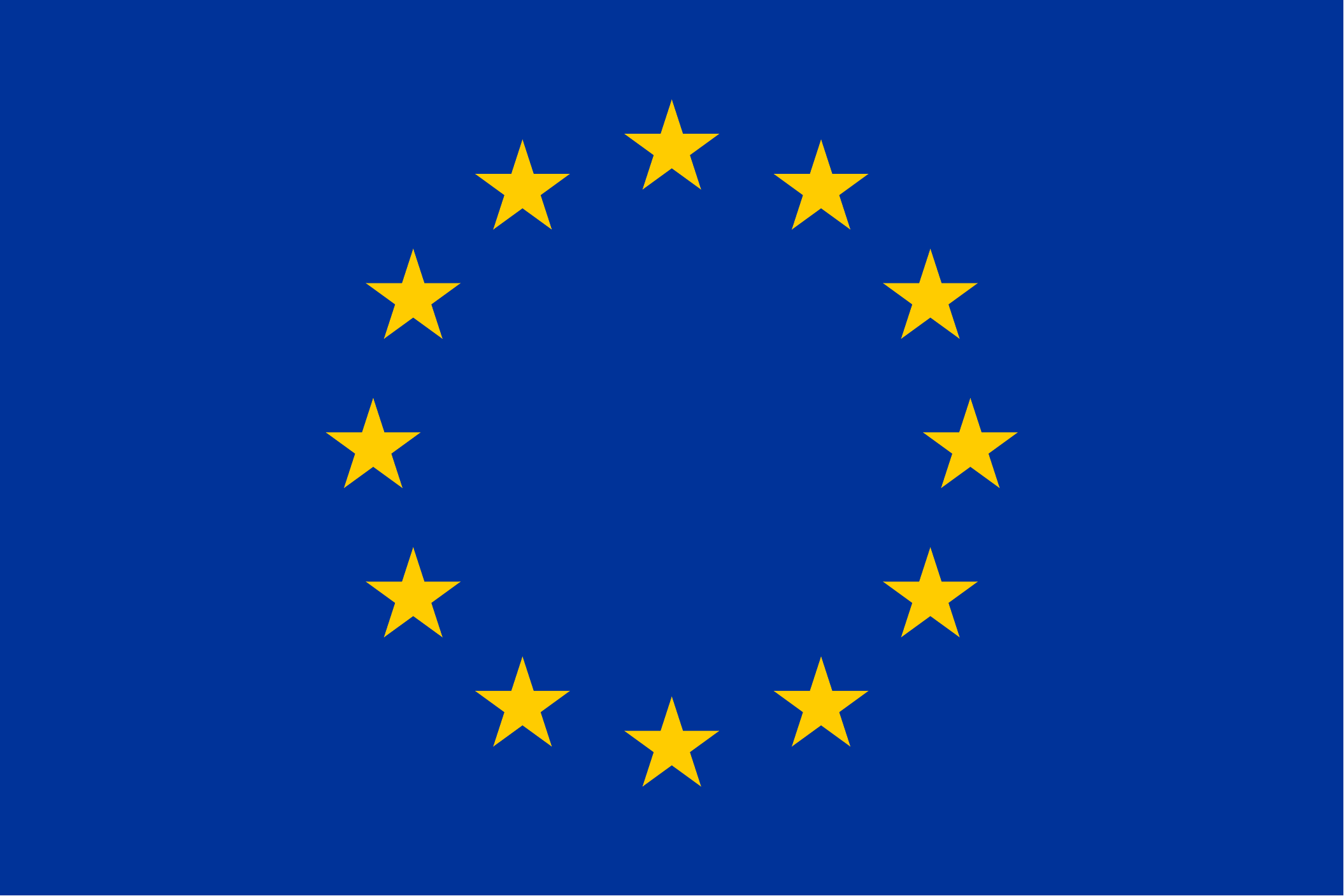} Horizon 2020 research and innovation programme under the MSCA grant agreement No. 665778} and Michael Yampolsky\thanks{M. Yampolsky was partially supported by NSERC Discovery Grant}}
\date{}

\abovedisplayskip 8pt \abovedisplayskip 8pt \belowdisplayskip 9pt

\maketitle

\begin{abstract}
	We find an abundance of Cremer Julia sets of an arbitrarily high computational complexity.
\end{abstract}


\section{Introduction.}
Most of us have seen pictures of a quadratic Julia sets on a computer screen. A program to visualize such a set seems easy to write based on its definition. Let us start by noting that a linear change of coordinates transforms every quadratic polynomial into the form
$$p_c(z)=z^2+c,$$
and no two functions $p_c(z)$ with different values of $c$ are linearly conjugate. Thus when we study the dynamics of quadratics, it is sufficient to restrict ourselves to maps of this form.

 If we iterate $p_c$ starting with a point $z_0\in\Bbb C$,
we will obtain an infinite orbit
$$z_0,\;z_1=p_c(z_0),\;z_2=p_c(z_1)=p_c(p_c(z_0)),\ldots,z_n=p_c^{\circ n}(z_0),\ldots$$
Clearly, if $z_0>>1$ is large enough in relation to $c$, then $z_1\approx z_0^2$, $z_n\approx z_0^{2^n}$, and so $z_n\to\infty$. Thus, the set $K_c$ consisting of initial points $z_0$ whose orbits {\it do not} converge to infinity is bounded. It is known as the {\it filled Julia set} of $p_c$, and the Julia set $J_c$ is defined as its boundary:
$$J_c=\partial K_c.$$

Alternatively, we can  define $J_c$ as the {\it repeller} of the dynamical system $p_c:\mathbb C\to\mathbb C$. That is, $J_c$ is the limit set of the inverse images $p_c^{-n}(z_0)$ for all $z_0\in\mathbb C$ except at most one value (the unique exception happens when $c=0$ and $z_0=0$).

This definition suggests what is perhaps the simplest approach to computing $J_c$: use the set set $p_c^{-n}(z_0)$ for some $z_0\in\mathbb C$ and a very large $n$ to approximate $J_c$. Often, a program like this produces a satisfactory image. Its principal shortcoming, however, is evident: how can we tell what $n$ to choose to
get an approximation of the picture of $J_c$ (more formally, for which $n$ will the distance between
$J_c$ and $p_c^{-n}(z_0)$ be less than one pixel size for the desired screen resolution)?

An alternative approach, based on the first definition is also common: iterate points in the plane (centers of pixels with the given screen resolution) $n$ times, and see if the modulus of an iterate exceeds some fixed bound $M=M(c)$ such that, for instance, $|p_c(z)|>2|z|$ for $|z|>M$. Remove such points from the picture; what is left is an approximation of $K_c$, and $J_c$ is its boundary. A similar problem arises here: how do we know what $n$ to choose, as some points $z$ whose orbits escape to $\{|z|>M\}$ may take an arbitrarily long time to do so?

The above questions are two instances of the celebrated Halting Problem, which is an example of an algorithmically unsolvable problem given by Turing in \cite{Tur}. In fact, M.~Braverman and the second author have shown that for some Julia sets obtaining a faithful computer image is {\it as hard as} the Halting Problem (see \cite{BY06,BY08}). So those are pictures that we can never hope to see. However, in a way, the {\it non-computable} examples of Julia sets are well understood. They belong to a class of Siegel quadratic polynomials
(see \S~\ref{sec:cremer} for the definition), and at least for some of them (the locally connected ones produced in  \cite{BY09}) we know what they would look like.

There is another class of Julia sets that has long baffled computational practitioners: Cremer quadratic Julia sets. We discuss the definition of such Julia sets in \S~\ref{sec:cremer}. We do not know what their picture would look like and, so far, no one has been able to produce an informative image of such $J_c$. Counter-intuitively, all Cremer quadratic Julia sets are computable, at least in theory. This was proven in \cite{BBY}, where an explicit algorithm for computing accurate images of Cremer quadratic Julia sets was given. The algorithm is, however, not practical. Its running time on a screen with a reasonable resolution would be enormous. However, its existence allows us to formulate the following questions:
\begin{itemize}
\item[(I)] Does there exist an algorithm to compute at least one Cremer Julia sets with a practical running time (for instance, polynomial in $n$, where $2^{-n}$ is the size of the pixel on the computer screen)?
\item[(II)] Does there exist at least one Cremer Julia set for which every algorithm will have an impractical running time (i.e. can we prove that there is at least one such $J_c$ with a high, for instance, non-polynomial, lower complexity bound)?
\end{itemize}
Note that the two questions are not mutually exclusive.  In the present paper we show that the second one has an emphatically positive answer:

\medskip
\noindent
{\sl For every lower bound $t(n)$ there exist an abundance of Cremer quadratic Julia sets whose computational complexity is not lower than $t(n)$}.

\medskip
\noindent
The structure of the paper is as follows. In \S~\ref{sec:cremer} we briefly review the definitions of Cremer Julia sets. In \S~\ref{sec:comput} we formalize the concept of computational complexity of $J_c$, and in \S~\ref{sec:result} we formulate our main theorem. In \S~\ref{sec:lavaurs} we introduce the main tools of Complex Dynamics used in the proof. We present the proof in \S~\ref{sec:proof}.

\subsection{Cremer quadratic Julia sets}
\label{sec:cremer}

We refer the reader to the classical book of Milnor \cite{Mil} for a detailed introduction to the basic concepts of Complex Dynamics. We will assume familiarity with the standard definitions, and will only briefly recall a few  facts about Cremer quadratics below.

 Let $f$ be a holomorphic map defined on an open domain $U$. Let $z_0\in U$ be a periodic point of period $p$ for $f$. Denote by $\lambda=Df^p(z_0)$ the multiplier of $z_0$.
Fatou-Shishikura bound implies that a quadratic polynomial $p_c$ can have at most one periodic point whose multiplier $\lambda\in\{|z|\leq 1\}$. We are principally interested in the case of irrationally indifferent periodic points, that is, $\lambda=e^{2\pi i\theta}$ with $\theta\in\mathbb R \setminus \mathbb Q$.

The map $f^p$ is called linearizable on a neighborhood of $z_0$ if there exist a neighborhood $V$ of $z_0$ and a conformal map $\phi$ from $V$ to a neighborhood of $0$ such that $$\phi\circ f^p\circ\phi^{-1}(z)=\lambda z\;\;\text{for all}\;\;z\in\phi(V).$$
 The point $z_0$ with $\lambda=\exp(2\pi i\theta)$, $\theta\in\mathbb R\setminus\mathbb Q$ is called a Cremer periodic point if  $f^p$ is not linearizable near $z_0$; it is called a Siegel point otherwise.

 It is known that the property of being Cremer is directly related to the Diophantine properties of $\theta\notin\mathbb Q$.  Namely, let $$\theta=\frac{1}{a_1+\frac{1}{a_2+\frac{1}{a_3+\ldots}}}, \;a_i\in \mathbb N,\;\text{ and let }\;\frac{p_n}{q_n}=\frac{1}{a_1+\frac{1}{a_2+\ldots\frac{1}{a_n}}}$$ be the $n$-th continued fraction convergent  of $\theta$. Brjuno \cite{Brjuno-71} showed that if the following condition is satisfied
$$\sum\limits_{n=1}^\infty \frac{\log q_{n+1}}{q_n}<\infty $$ ($\theta$ is called \emph{Brjuno number} in this case) then the map $f^{ p}(z)$ is linearizable near $z_0$. 

Let us now specialize to the case of quadratic polynomials. A quadratic map has two fixed points, counted with multiplicity. It will be convenient to us to consider quadratic polynomials of the form
\begin{equation}\label{quadf2}
z\mapsto \lambda z+z^2,
\end{equation}
with a fixed point at the origin, whose multiplier is equal to $\lambda$. A more familiar looking formula
 $p_c(z)=z^2+c$ is transformed into (\ref{quadf2}) with
$$c=\lambda/2-\lambda^2/4$$ by the linear change of coordinates
$$w=z-\lambda/2.$$
Since we are specifically interested in the case $\lambda=e^{2\pi i\theta}\in S^1$, let us set
$$f_\theta(z)=e^{2\pi i\theta}z+z^2\text{, for }\theta\in\mathbb R;$$
this is a one real parameter family of quadratic polynomials.
 Yoccoz \cite{Yoccoz-95} proved a famous converse of Brjuno's Theorem for this family, that is
if $\theta$ is not Brjuno then $0$ is a Cremer point of $f_\theta$.

 In what follows, we will mostly restrict our attention to the family $f_\theta$. Where it does not lead to a confusion, we will write $J_\theta$, $K_\theta$, etc., for the Julia set and the filled Julia set of $f_\theta$.

 Yoccoz showed that quadratic maps $f_\theta$ with a Cremer fixed point have the {\it Small Cycle property}, \ie there are  periodic cycles contained in arbitrarily small neighborhoods of the Cremer fixed point, different from the Cremer point itself \cite{Yoccoz-95}. Vice  versa, if a fixed point of a holomorphic map has the Small Cycle property it is necessarily a Cremer fixed point since by trivial argumentation it can not be of any other type (attracting, repelling, Siegel or parabolic).

\subsection{Preliminaries on computability}
\label{sec:comput}
In this section we briefly recall the notions of computability and computational complexity of sets.
For a more detailed exposition  we refer the reader to the monograph \cite{BY08}.
 The notion of computability relies on the concept of a Turing Machine (TM) \cite{Tur},
 which is a commonly accepted way of formalizing
the definition of an algorithm.
A precise description of a Turing Machine is quite technical and we
do not give it here, instead referring the reader to any text on Computability Theory (e.g. \cite{Pap} and \cite{Sip}).
The computational power of a Turing Machine is provably equivalent to that of a computer program running on a RAM
computer with an unlimited memory.

\begin{Def}\label{comp fun def}
A function $f:\mn\rightarrow \mn$ is called computable, if there exists
a TM which takes $x$ as an input and outputs $f(x)$.
\end{Def}
Note that Definition \ref{comp fun def} can be naturally extended to
functions on arbitrary countable sets, using a convenient
identification with $\mathbb{N}$. The following definition of a computable real number is
due to Turing \cite{Tur}:
\begin{Def} A real number $\alpha$ is called computable if there is a
 computable function $\phi:\mathbb{N}\rightarrow \mathbb{Q}$,
 such that for all $n$ $$\left|\alpha-\phi(n)\right|<2^{-n}.$$
\end{Def}
 The set of computable reals is denoted by $\mathbb{R}_\mathcal{C}$. Trivially, $\mq\subset\mrc$. Irrational numbers
such as $e$ and $\pi$ which can be computed with an arbitrary precision also belong to $\mrc$. However, since there
exist only countably many algorithms, the set $\mrc$ is countable, and hence a typical real number is not
computable.

 The set of computable complex numbers is
defined by
$\mathbb{C}_\mathcal{C}=\mathbb{R}_\mathcal{C}+i\mathbb{R}_\mathcal{C}$.
Note that  $\mathbb{R}_\mathcal{C}$ (as well as $\mathbb{C}_\mathcal{C}$)
considered with the usual arithmetic operation forms a  field.

To define computability of functions of real or complex variable we need to introduce the concept of an oracle:
\begin{Def}
A function $\phi:\mn\to\mq+i\mq$ is an oracle for $c\in\mc$ if for every $n\in\mn$ we have
$$|c-\phi(n)|<2^{-n} .$$
\end{Def}
A TM equipped with an oracle (or simply an {\it oracle TM}) may query the oracle by reading the value of $\phi(n)$ for an arbitrary $n$.
\begin{Def}
Let $S\subset \mc$. A function $f:S\to \mc$ is called computable if there exists an oracle TM $M^\phi$ with a single natural
input $n$ such that if $\phi$ is an oracle for $z\in S$ then $M^\phi$ outputs $w\in \mq+i\mq$ such that
$$|w-f(z)|<2^{-n} .$$
\end{Def}
We say that a function $f$ is {\it poly-time computable} if in the above definition the algorithm $M^\phi$ can be made to run
in time bounded by a polynomial in $n$, independently of the choice of a point $z\in S$ or an oracle representing this
point. Note that when calculating the running time of $M^\phi$, querying $\phi$ with precision $2^{-m}$ counts as
$m$ time units. In other words, it takes $m$ ticks of the clock to read the argument of $f$ with precision $m$ (dyadic)
digits.

 Let $d(\cdot,\cdot)$ stand for Euclidean distance between points or sets in $\mathbb{R}^2$.
 Recall the definition of the {\it Hausdorff distance} between two sets:
$$d_H(S,T)=\inf\{r>0:S\subset U_r(T),\;T\subset U_r(S)\},$$
where $U_r(T)$ stands for the $r$-neighborhood of $T$:
$$U_r(T)=\{z\in \mathbb{R}^2:d(z,T)\leqslant r\}.$$ We call
a set $T$ a {\it $2^{-n}$ approximation} of a bounded set $S$, if
$d_H(S,T)\leqslant 2^{-n}$. When we try to draw a $2^{-n}$ approximation $T$ of a set $S$
 using a computer program, it is convenient to
 let $T$ be a finite
 collection of disks of radius $2^{-n-2}$ centered at points of the form $(i/2^{n+2},j/2^{n+2})$
 for $i,j\in \mathbb{Z}$.  We will call such a set {\it dyadic}.
A dyadic set $T$ can be described using a function
\begin{eqnarray}\label{comp fun}h_S(n,z)=\left\{\begin{array}{ll}1,&\text{if}\;\;d(z,S)\leqslant 2^{-n-2},
\\0,&\text{if}\;\;d(z,S)\geqslant 2\cdot 2^{-n-2},\\
0\;\text{or}\;1&\text{otherwise},
\end{array}\right.\end{eqnarray} where $n\in \mathbb{N}$ and $z=(i/2^{n+2},j/2^{n+2}),\;i,j\in \mathbb{Z}.$\\
 \\ Using this
function, we define computability and computational
complexity of a set in $\mathbb{R}^2$ in the following way.
\begin{Def}\label{DefComputeSet} Let $t:\mathbb N\to \mathbb N$. A bounded set $S\subset
\mathbb{R}^2$ has time complexity bounded by $t(n)$ if there
exist $n_0\in\mathbb N$ and a TM, which computes values of a function $h(n,\bullet)$
of the form (\ref{comp fun}) in time $t(n)$, for all $n\geq n_0$. We say that
$S$ is poly-time computable, if there exists a polynomial
$p(n)$, such that $S$ is computable in time $p(n)$.
\end{Def}

\subsection{The main result}
\label{sec:result}
\begin{Th}\label{ThMain} For any function $t:\mathbb N\to \mathbb N$ there exists a dense $G_\delta$ subset of Cremer parameters $S_t\subset S^1$ such that for any $\theta\in S_t$ the Julia set of $f_\theta$ has time complexity not lower than $t(n)$.
\end{Th}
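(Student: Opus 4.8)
The plan is to combine a Baire‑category argument ranging over all algorithms with a quantitative form of parabolic implosion. Here and below a ``Turing machine computing $J_\theta$'' means an oracle TM reading the parameter $\theta$ (so reading $\theta$ to precision $2^{-m}$ costs $m$ steps); this is the regime in which \cite{BBY} shows that Cremer Julia sets are computable at all. Enumerate all pairs $(M,n_0)$ with $M$ such a machine and $n_0\in\mn$, and call $\theta$ \emph{$(M,n_0)$-bad} if it is \emph{not} the case that $M$, for every valid $\theta$-oracle, correctly outputs a value of some function of the form (\ref{comp fun}) for $J_\theta$ within $t(n)$ steps on every admissible input $(n,z)$ with $n\ge n_0$; equivalently, $M$ is incorrect for $J_\theta$ for some oracle and input, or runs longer than $t(n)$ on some admissible input with $n\ge n_0$. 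I will show that for each $(M,n_0)$ the set of $(M,n_0)$-bad parameters has dense interior $U_{M,n_0}\subset S^1$. Granting this, put $S_t:=\big(\bigcap_{(M,n_0)}U_{M,n_0}\big)\cap\mathcal C$, where $\mathcal C$ is the set of Cremer parameters; by Brjuno's theorem and Yoccoz's converse, $\mathcal C$ coincides with the set of non-Brjuno numbers and is a dense $G_\delta$ in $S^1$, so $S_t$ is a dense $G_\delta$ of Cremer parameters, and by construction $J_\theta$ has time complexity not bounded by $t(n)$ for every $\theta\in S_t$.

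To build $U_{M,n_0}$, fix $(M,n_0)$ and an arbitrary interval $I\subset S^1$; it suffices to produce a non-empty open $I'\subset I$ consisting of $(M,n_0)$-bad parameters. Choose a rational $p/q\in I$ with $q$ large, regarded as a convergent $p/q=p_m/q_m$ of its own continued fraction, so that $f_{p/q}$ has a parabolic fixed point at $0$ of rotation number $p/q$. The essential dynamical input, to be obtained from the Lavaurs/\'{E}calle--Voronin theory of \S\ref{sec:lavaurs}, is the following \emph{feature lemma}: there is an integer $N=N(q)$ with $N(q)\to\infty$ as $q\to\infty$, and a point $z_\star$ of the dyadic grid of mesh $2^{-N}$, such that arbitrarily close to $p/q$ one finds both a non-empty open set of parameters $\theta'$ whose Julia set contains a repelling periodic point within $2^{-N-2}$ of $z_\star$ (forcing the value $1$ at $(N,z_\star)$), and a non-empty open set of parameters $\theta'$ for which the closed disc of radius $2\cdot 2^{-N-2}$ about $z_\star$ lies in the basin of $\infty$ of $f_{\theta'}$ (forcing the value $0$, since then $d(z_\star,J_{\theta'})\ge 2\cdot 2^{-N-2}$). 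Both occur in every punctured neighbourhood of $p/q$ by parabolic implosion: as $\theta'\to p/q$ the Lavaurs phase winds densely around the circle while $J_{\theta'}$ fails to converge, accumulating on the Julia sets of all the Lavaurs maps $g_\alpha$; one selects $z_\star$ at a location near $J_{g_{\alpha_1}}$ for some phase $\alpha_1$ and interior to the escaping set of $g_{\alpha_0}$ for some other phase $\alpha_0$. Openness of the first alternative is the lower semicontinuity of $\theta'\mapsto J_{\theta'}$ (persistence of repelling cycles); that of the second is the openness of escape on a compact disc.

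Now pick $q$ so large that $N=N(q)\ge n_0$, and let $I'_0=(p/q-\rho,\,p/q+\rho)\subset I$ with $\rho<2^{-t(N)-2}$; by the feature lemma $I'_0$ contains parameters of both types above (concretely, one forces $\theta'$ into $I'_0$ by taking the partial quotient $a_{m+1}(\theta')$ sufficiently large). Run $M$ on input $(N,z_\star)$ with a slightly over-precise oracle for $p/q$. If this run does not halt within $t(N)$ steps, then since in $t(N)$ steps it reads $\theta$ only to precision $2^{-t(N)}$, for every $\theta'\in I'_0$ there is a valid $\theta'$-oracle on which the run is identical and still exceeds $t(N)$ steps; as $N\ge n_0$, every $\theta'\in I'_0$ is then $(M,n_0)$-bad, and we take $I'=I'_0$. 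If instead $M$ halts within $t(N)$ steps with output $b\in\{0,1\}$, the same ``freezing'' shows that for every $\theta'\in I'_0$ there is a valid $\theta'$-oracle on which $M$ outputs $b$ at $(N,z_\star)$; but on the non-empty open subset of $I'_0$ of the type forcing the value $1-b$, this output is inadmissible for $J_{\theta'}$, so those parameters are $(M,n_0)$-bad and that open subset is our $I'$. In either case $I'\subset I$ is a non-empty open set of $(M,n_0)$-bad parameters, which completes the reduction and hence the proof.

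The principal obstacle is the feature lemma, and specifically its \emph{uniformity in $q$}. Parabolic implosion is classically analysed at a fixed parabolic fixed point, whereas here the period $q$ of the parabolic point of $f_{p/q}$ must grow; one must therefore show that the Fatou-coordinate normalizations, the non-convergence of $J_{\theta'}$, and the genuine dependence of the Lavaurs Julia sets $J_{g_\alpha}$ on $\alpha$ all survive with estimates degrading no worse than polynomially in $q$, so that an honest feature of $J_{\theta'}$ distinguishing the two alternatives appears already at some scale $2^{-N(q)}$ with $N(q)\to\infty$. Locating $z_\star$ and establishing the two-sided alternative with $q$-uniform control is the technical heart; the remaining steps are the soft category and indistinguishability packaging above.
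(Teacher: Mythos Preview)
Your overall architecture---Baire category over all machines, a freezing/indistinguishability argument on the oracle, and parabolic implosion to manufacture nearby parameters with macroscopically different Julia sets---is exactly the paper's. The paper packages it slightly differently: instead of a single distinguishing pixel $z_\star$, it produces (Corollary~\ref{CoDistLim}) two sequences of Cremer parameters $\gamma^1_j,\gamma^2_j\to p/q$ whose Julia sets have \emph{distinct} Hausdorff limits, then picks $j$ so large that $\gamma^1_j$ and $\gamma^2_j$ agree to $t(n_1)$ dyadic digits; and instead of intersecting with the Cremer locus at the end, it threads the Small Cycle property through the nested neighborhoods (condition 2 of Proposition~\ref{PropCremerSeq}). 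Both variants work.

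Where your proposal goes astray is in the diagnosis of the ``principal obstacle''. You do \emph{not} need any uniformity in $q$, and in particular you do not need $N(q)\to\infty$. For a \emph{fixed} rational $p/q$ the Lavaurs Julia sets $J_{p/q,\sigma}$ (and the associated limits $\widetilde J_{\sigma,\epsilon}$) already differ for different phases; once they differ at all, they differ at every sufficiently fine dyadic scale, so you may simply take $N=\max\{n_0,N_0(p/q)\}$ and then shrink $\rho<2^{-t(N)-2}$ afterwards. The paper does precisely this: it fixes a rational $\theta_0$ and sets $n_1=\max\{N,n_0+1\}$ where $2^{-N}$ is the scale at which $J^1\ne J^2$. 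No passage $q\to\infty$ is involved, and the worries about Fatou-coordinate normalizations ``degrading polynomially in $q$'' are irrelevant.

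The genuine technical heart---which you invoke but do not argue---is that the Lavaurs phase \emph{actually moves} the limit Julia set: one needs $\alpha_0\ne\alpha_1$ with $J_{g_{\alpha_0}}\ne J_{g_{\alpha_1}}$ (in your formulation), or equivalently that the family $\mathcal J(\sigma)$ is not constant in $\sigma$. This is the content of Proposition~\ref{PropDistLim}, proved in the paper by analysing how $L_\sigma^{-1}(J_\theta)$ meets external rays landing at iterated preimages of $0$ as $\sigma$ varies. That is the step your feature lemma would have to supply; the rest of your outline is sound.
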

\noindent Let us explain the meaning of the statement of Theorem \ref{ThMain} in more detail. Given a function $t(n)$ and a parameter $\theta\in S_t$ the map $f_\theta$ has a Cremer fixed point. The Theorem states that for any Turing machine $M=M(n)$ with an oracle for $\theta$ there exists a sequence of integers $n_i\to\infty$ such that $M(n_i)$ will not produce a correct $2^{-n_i}$-approximation of $J_\theta$ in time less or equal to $t(n_i)$.
\section{Lavaurs maps}
\label{sec:lavaurs}
Our exposition of Douady-Lavaurs theory \cite{Lav} of parabolic implosion follows the lecture notes \cite{Zin} (see, in particular Theorem~2.3.2 there).

Let $\theta=p/q$. Then $f_\theta$ has a parabolic fixed point at the origin with exactly $q$ attracting and $q$ repelling directions. Denote these directions by $\nu_a^i$ and $\nu_r^i$, $i=0,\ldots,q-1$, so that $\nu_{a/r}^{i+1\mod q}$ is obtained from $\nu_{a/r}^i$ by rotating by the angle $2\pi /q$ and $\nu_r^i$ is obtained from $\nu_a^i$ by rotating by the angle $\pi/q$. Fix corresponding $q$ attracting and $q$ repelling petals $P_a^i,P_r^i$. Let $\Phi_a^i$ and $\Phi_r^i$ be the attracting and the repelling Fatou coordinates for $f_\theta^q$ defined on the unions of these petals, \ie
$$\Phi_{a/r}^i(f_\theta^q)(z)=\Phi_{a/r}^i(z)+1,\;\;z\in P_{a/r}^i.$$ The interior of the filled Julia set $K_\theta$ can be written as a disjoint union
$$K_\theta= \sqcup_{i=0}^{q-1} U_i\;\; \text{for}\;\; U_i=\{z:f_\theta^{qr}(z)\in P_a^i\;\;\text{for some}\;\;r\geqslant 0\}.$$ Fix $i$. Extend $\Phi_a^i$ to $U_i$ by $$\Phi_a^i(z)=\Phi_a^i(f_\theta^{qn}(z))-n$$ whenever $f_\theta^{qn}(z)\in P_a^i$. The inverse of the repelling Fatou coordinate $\Phi_r^i$ extends to a holomorphic map $\Psi_r^i$ on $\mathbb C$.

For $\sigma\in\mathbb C$ let $T_\sigma(z)=z+\sigma$ be the shift map. The {\it Lavaurs map} $L_\sigma$ is defined on $U_i$ by
$$L_\sigma(z)=\Psi_r^i\circ T_\sigma\circ \Phi_a^i.$$ Notice that each Fatou coordinate is defined up to an additive constant. Changing this constant transforms $L_\sigma$ into $L_{\sigma+\tau}$ for some $\tau\in\mathbb C$.
\begin{Th}\label{ThDouady} Let $\theta=p/q$, $p,q$ coprime. For an appropriate choice of Fatou coordinates for $f_\theta$ the following is true. Assume that $\epsilon_k\to 0$, $|\Arg(\epsilon_k)|<\pi/4$, and $N_k\in\mathbb Z$ are such that \begin{equation}\label{EqEpsN}-\frac{\pi}{\epsilon_k q^2}+N_k\rightarrow \sigma\in\mathbb C,k\to\infty.\end{equation} Then $f_{\theta+\epsilon_k}^{N_k}$ converges uniformly on compact subsets of the interior of $K_{\theta}$ to the map $L_\sigma$.
\end{Th}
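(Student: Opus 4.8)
The plan is to recognise Theorem~\ref{ThDouady} as an instance of the Douady--Lavaurs parabolic implosion theorem and to run the standard argument, importing the hard analytic estimates from \cite{Lav,Zin}. Three ingredients are needed: (i) control of the geometry of the perturbation $\theta\mapsto\theta+\epsilon$ together with stability of Fatou coordinates; (ii) a sharp estimate for the number of iterations needed to cross the ``gate'' opened when the parabolic point splits; and (iii) a short assembly. The hard part is (i)--(ii).

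\emph{Geometry and stable Fatou coordinates.} As $\theta=p/q$ is moved to $\theta+\epsilon$, the origin stays fixed for $f_{\theta+\epsilon}$ with multiplier $e^{2\pi i(\theta+\epsilon)}$, while the $(q+1)$-fold zero of $f_\theta^q(z)-z$ at $0$ breaks into the persistent fixed point at $0$ and a period-$q$ cycle of $f_{\theta+\epsilon}$ clustering at $0$; these $q$ new fixed points of $f_{\theta+\epsilon}^q$ bound $q$ narrow ``channels'', one for each index $i$, through which the orbits of $f_{\theta+\epsilon}^q$ are forced to travel on their way from $P_a^i$ to $P_r^i$. On a neighbourhood of the $i$-th channel one constructs an incoming Fatou coordinate $\Phi^i_{\epsilon,\mathrm{in}}$ and an outgoing Fatou coordinate $\Phi^i_{\epsilon,\mathrm{out}}$ conjugating the return dynamics to a translation. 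I would fix the additive constants once and for all --- this is exactly the ``appropriate choice of Fatou coordinates'' in the statement, and changing them replaces $L_\sigma$ by $L_{\sigma+\tau}$ as noted in \S\ref{sec:lavaurs} --- and then the content of implosion theory is that, as $\epsilon\to 0$ with $|\Arg\epsilon|<\pi/4$, one has $\Phi^i_{\epsilon,\mathrm{in}}\to\Phi_a^i$ and $\Phi^i_{\epsilon,\mathrm{out}}\to\Phi_r^i$, hence $(\Phi^i_{\epsilon,\mathrm{out}})^{-1}\to\Psi_r^i$, uniformly on compact subsets of $U_i$ and of $\mathbb C$ respectively. Proving this stability (and not mere convergence along subsequences or wild oscillation) is the main obstacle: it rests on a careful description of the channels and a normal-families/compactness argument, and it is precisely here that the sectorial restriction $|\Arg\epsilon|<\pi/4$ is used, since it keeps the satellite cycle and the channels in good position.

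\emph{Transit across a channel.} In a chart $\zeta=1/z$ adapted to the $q$-fold symmetry the return map along a channel takes the almost-translation form $\zeta\mapsto\zeta+1+O(\epsilon)+O(1/\zeta)$; integrating this from the incoming end (near $P_a^i$) to the outgoing end (near $P_r^i$) shows that a full traverse costs $\dfrac{\pi}{q^2\epsilon}+O(1)$ iterations, and more precisely (with the normalisations above) that
$$\Phi^i_{\epsilon,\mathrm{out}}\circ f_{\theta+\epsilon}^{\,N}=\Phi^i_{\epsilon,\mathrm{in}}+\Big(N-\frac{\pi}{q^2\epsilon}\Big)+o(1)$$
as $\epsilon\to0$ within the sector, uniformly on compact subsets of $U_i$. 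Note that $|\Arg\epsilon|<\pi/4$ forces $\re(1/\epsilon)=\cos(\Arg\epsilon)/|\epsilon|\to+\infty$, so the channel genuinely costs $\sim\pi/(q^2|\epsilon|)$ iterations, and the combination $N-\pi/(q^2\epsilon)$ appearing here is precisely the one controlled by \eqref{EqEpsN}.

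\emph{Assembly.} Fix $z$ in $\mathrm{Int}\,K_\theta$, say $z\in U_i$, and set $u=\Phi_a^i(z)$. For $k$ large, $z$ lies in the domain of $\Phi^i_{\epsilon_k,\mathrm{in}}$ with $\Phi^i_{\epsilon_k,\mathrm{in}}(z)\to u$. Applying the transit estimate with $N=N_k$ and using \eqref{EqEpsN} in the form $N_k-\pi/(q^2\epsilon_k)\to\sigma$ gives
$$\Phi^i_{\epsilon_k,\mathrm{out}}\big(f_{\theta+\epsilon_k}^{\,N_k}(z)\big)=\Phi^i_{\epsilon_k,\mathrm{in}}(z)+\Big(N_k-\frac{\pi}{q^2\epsilon_k}\Big)+o(1)\longrightarrow u+\sigma ,$$
whence
$$f_{\theta+\epsilon_k}^{\,N_k}(z)=\big(\Phi^i_{\epsilon_k,\mathrm{out}}\big)^{-1}\Big(\Phi^i_{\epsilon_k,\mathrm{out}}\big(f_{\theta+\epsilon_k}^{\,N_k}(z)\big)\Big)\longrightarrow\Psi_r^i(u+\sigma)=\Psi_r^i\circ T_\sigma\circ\Phi_a^i(z)=L_\sigma(z).$$
Since every convergence invoked is uniform on compact subsets of each $U_i$, this upgrades to uniform convergence $f_{\theta+\epsilon_k}^{N_k}\to L_\sigma$ on compact subsets of $\mathrm{Int}\,K_\theta=\sqcup_{i=0}^{q-1}U_i$, which is the assertion.
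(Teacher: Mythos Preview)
The paper does not prove Theorem~\ref{ThDouady}: it is stated as a known result and attributed to the Douady--Lavaurs theory, with an explicit pointer to Theorem~2.3.2 in Zinsmeister's notes \cite{Zin} (see the sentence opening \S\ref{sec:lavaurs}). Your proposal is therefore not competing with any argument in the paper; it is a sketch of the very proof the paper is citing. As such it is accurate in outline: the stability of the perturbed Fatou coordinates under the sectorial hypothesis, the transit estimate $N-\pi/(q^2\epsilon)$ across the gate, and the assembly into $\Psi_r^i\circ T_\sigma\circ\Phi_a^i$ are exactly the standard steps in \cite{Lav,Zin}. Just be aware that you are, by your own admission, importing the substantive analytic content (the uniform convergence of the perturbed Fatou coordinates and the $o(1)$ in the transit identity) rather than proving it, so what you have written is a correct roadmap rather than a self-contained proof---which is also all the paper claims for this statement.
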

Given a rational number  $\theta$ fix Fatou coordinates of the corresponding parabolic map $f_\theta$ as in Theorem \ref{ThDouady}.
For a complex number $\sigma$ the {\it filled Lavaurs Julia set} is defined as follows:
$$K_{\theta,\sigma}=\overline{\{z\in K_\theta:L_\theta^n(z)\in K_\theta\;\;\forall\;\;n\in\mathbb N \}},$$
and the {\it Lavaurs Julia set} is it's boundary:
$$J_{\theta,\sigma}=\partial K_{\theta,\sigma}.$$
Using Lavaurs maps, Douady \cite{Douady-94} showed that the correspondence $$\theta\mapsto J_\theta$$ is discontinuous with respect to the Hausdorff metric on compact sets at parabolic parameters (\ie rational $\theta$). In particular, given a rational $\theta$, $\sigma\in\mathbb C$ and a sequence $\epsilon_n$ as in \eqref{EqEpsN} one has:
\begin{equation}\label{EqRelationsJulias}J_\theta\subsetneq J_{\theta,\sigma}\subset\liminf J_{\theta+\epsilon_n}\subset\limsup K_{\theta+\epsilon_n}\subset K_{\theta,\sigma}\subsetneq K_\theta.\end{equation}
For $\sigma\in\mathbb R,\epsilon=\{\epsilon_n\}$ as above assume, in addition, that $\theta+\epsilon_n$ is Cremer for each $n$ and there exists a limit
$\lim\limits_{n\to \infty} J_{\theta+\epsilon_n}$. Denote this limit by $\widetilde J_{\sigma,\epsilon}$. Let $\mathcal J(\sigma)$ be the set of all possible limits $\widetilde J_{\sigma,\epsilon}$.

\begin{figure}[ht]
\centerline{\includegraphics[width=0.7\textwidth]{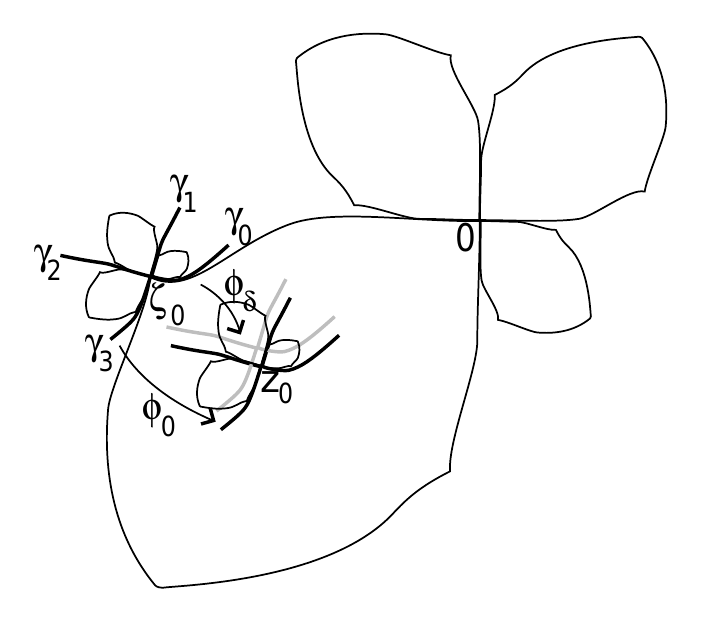}}
\caption{\label{fig1}Illustration to the proof of Proposition~\ref{PropDistLim}}
\end{figure}

\begin{Prop}\label{PropDistLim} For every rational number $p/q$ and every $\sigma_0\in\mathbb R$ there exists at most countably many $\sigma\in\mathbb R$ such that $\mathcal J(\sigma)\cap\mathcal J(\sigma_0)\neq\varnothing$.
\end{Prop}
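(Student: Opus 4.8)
\emph{Proof proposal.}
Fix $\sigma_0$ and suppose $\sigma\in\mathbb R$ satisfies $\mathcal J(\sigma)\cap\mathcal J(\sigma_0)\neq\varnothing$; pick a common limit $J^*$ in this intersection, together with sequences $\epsilon_n\to0$ realizing it as $\widetilde J_{\sigma,\epsilon}$ and $\delta_n\to0$ realizing it as $\widetilde J_{\sigma_0,\delta}$ (so $-\pi/(\epsilon_n q^2)+N_n\to\sigma$ and $-\pi/(\delta_n q^2)+M_n\to\sigma_0$ for suitable integers). The plan is to prove that necessarily $\sigma-\sigma_0\in\mathbb Q$; since $\sigma_0+\mathbb Q$ is countable, this gives the Proposition. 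First I would record, from \eqref{EqRelationsJulias} applied to both sequences,
$$J_\theta\subsetneq J_{\theta,\sigma}\subseteq J^*\subseteq K_{\theta,\sigma}\subsetneq K_\theta,\qquad J_{\theta,\sigma_0}\subseteq J^*\subseteq K_{\theta,\sigma_0},$$
so that $J^*$ is bounded, meets the interior of $K_\theta$, and avoids the nonempty open set $\Omega_\sigma:=(\text{interior of }K_\theta)\setminus K_{\theta,\sigma}$. Next I would use Theorem~\ref{ThDouady}: $f_{\theta+\epsilon_n}^{N_n}\to L_\sigma$ and $f_{\theta+\delta_n}^{M_n}\to L_{\sigma_0}$ uniformly on compacta of the interior of $K_\theta$; since the Julia set of a polynomial is completely invariant under each of its iterates, passing to the Hausdorff limit shows that $J^*$ is completely $f_\theta$-invariant and forward invariant under $L_\sigma$ and $L_{\sigma_0}$ on the interior of $K_\theta$. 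Because each $L_\mu$ is a local biholomorphism off the countable grand orbit of the critical point of $f_\theta$, the local‑inverse version of the same limiting argument upgrades this to backward invariance of $J^*$ under $L_\sigma$ and $L_{\sigma_0}$ as well.

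The heart of the argument is the following algebraic observation, available because the Fatou coordinates have been fixed once and for all so that $L_\mu=\Psi_r\circ T_\mu\circ\Phi_a$ for every $\mu$. Set $\widehat J:=\Psi_r^{-1}(J^*)$, a closed subset of $\mathbb C$. From $\Psi_r\circ T_1=f_\theta^{q}\circ\Psi_r$ and $\Phi_a\circ f_\theta^{q}=T_1\circ\Phi_a$, complete $f_\theta^{q}$-invariance of $J^*$ is equivalent to invariance of $\widehat J$ under $w\mapsto w\pm1$. Moreover, if $\Psi_r(w)\in J^*$, choose $z_*$ in the interior of $K_\theta$ with $\Phi_a(z_*)=w-\sigma_0$ (possible since $\Phi_a$ maps each attracting basin onto $\mathbb C$); then $L_{\sigma_0}(z_*)=\Psi_r(w)\in J^*$, so backward invariance gives $z_*\in J^*$, whence $\Psi_r\bigl(w+(\sigma-\sigma_0)\bigr)=L_\sigma(z_*)\in J^*$. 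Thus $\widehat J+(\sigma-\sigma_0)\subseteq\widehat J$, and symmetrically $\widehat J-(\sigma-\sigma_0)\subseteq\widehat J$ (the countably many $w$ forcing a critical choice of $z_*$ are handled by closedness of $\widehat J$ together with the fact that $J^*$, a limit of Julia sets, is perfect). Hence $\widehat J$ is invariant under translation by the subgroup $\mathbb Z+(\sigma-\sigma_0)\mathbb Z\leqslant\mathbb R$. Equivalently, $J^*$ is invariant under the ``repelling cylinder'' flow $G_\tau:=\Psi_r\circ T_\tau\circ\Psi_r^{-1}$ (which satisfies $G_1=f_\theta^{q}$) for all $\tau$ in that subgroup.

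Now assume, for contradiction, that $\sigma-\sigma_0\notin\mathbb Q$. Then $\mathbb Z+(\sigma-\sigma_0)\mathbb Z$ is dense in $\mathbb R$, and since $\widehat J$ is closed it is invariant under \emph{all} real translations, so $\widehat J=\mathbb R+iB$ for some nonempty closed $B\subseteq\mathbb R$ and $J^*=\bigcup_{b\in B}\Psi_r(\mathbb R+ib)$. Since $J^*$ is bounded and disjoint from $\Omega_\sigma\neq\varnothing$ (equivalently from the nonempty open set $\Psi_r^{-1}(\Omega_\sigma)$), we have $B\neq\mathbb R$. On the other hand, complete $L_{\sigma_0}$-invariance of $J^*$ translates, through the transition (horn) map $\mathcal T:=\Phi_a\circ\Psi_r$, into the relation $\mathcal T(w)+\sigma_0\in\widehat J$ whenever $w\in\widehat J$ and $\mathcal T$ is defined at $w$; writing $\mathcal T(w)=w+\widetilde P(e^{2\pi i w})$ (legitimate since $\mathcal T\circ T_1=T_1\circ\mathcal T$), this says $b+\im\widetilde P\bigl(e^{-2\pi b}e^{2\pi i t}\bigr)\in B$ for every $b\in B$ and every $t$ with $\Psi_r(t+ib)$ in the interior of $K_\theta$. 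The contradiction is then extracted from the non‑triviality of parabolic implosion for $f_\theta$ — the fact that $\widetilde P$ is not constant, i.e.\ $\mathcal T$ is not affine: since the circular average of $\im\widetilde P$ over $\{|\zeta|=r\}$ is independent of $r$, inspecting the displayed relation near the endpoints of a complementary gap of $B$ and invoking the maximum principle for the harmonic function $\im\widetilde P$ forces $\im\widetilde P$ to be identically constant, hence $\mathcal T$ affine — impossible. Therefore $\sigma-\sigma_0\in\mathbb Q$, as required.

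I expect the genuinely hard point to be the last step: turning the simultaneous invariance of $J^*$ under the full group of real translations in the repelling‑cylinder coordinate and under the horn map $\mathcal T$ into a contradiction with ``$J^*$ a bounded proper subset of $K_\theta$'' — in other words, that the only closed sets invariant under $L_\sigma$, $L_{\sigma_0}$ and $f_\theta$ which sit strictly inside $K_\theta$ must force $\sigma\equiv\sigma_0$. Making this rigorous demands careful bookkeeping of the domains of the partially defined, generally non‑injective maps $\Psi_r,\Phi_a,\mathcal T$ (including the at most one value omitted by $\Psi_r$ and the set of $t$ for which $\Psi_r(t+ib)$ lies in the interior of $K_\theta$), and rests essentially on the non‑vanishing of the Écalle–Voronin modulus of $f_\theta^{q}$; the remaining steps (the sandwich inequalities, the invariance of $J^*$ obtained from Theorem~\ref{ThDouady}, and the reduction $\sigma-\sigma_0\notin\mathbb Q\Rightarrow\widehat J=\mathbb R+iB$) are, by comparison, routine.
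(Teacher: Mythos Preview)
Your strategy is genuinely different from the paper's, and the first several moves are sound: the invariance of a common Hausdorff limit $J^*$ under $f_\theta$, $L_\sigma$ and $L_{\sigma_0}$ does follow from Theorem~\ref{ThDouady} and complete invariance of Julia sets (with the usual care at critical points), and the deduction that $\widehat J=\Psi_r^{-1}(J^*)$ is invariant under the subgroup $\mathbb Z+(\sigma-\sigma_0)\mathbb Z$ is correct. (A minor point: since $\mathcal J(\sigma)=\mathcal J(\sigma+1)$ trivially, the natural target conclusion is $\sigma-\sigma_0\in\mathbb Z$, not $\mathbb Q$.) The step ``$\sigma-\sigma_0$ irrational $\Rightarrow$ $\widehat J=\mathbb R+iB$ for some closed $B\subsetneq\mathbb R$'' is also fine.

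The genuine gap is exactly where you locate it, and as written the argument does not close. The transition map $\mathcal T=\Phi_a\circ\Psi_r$ is only defined on the open set $\Psi_r^{-1}(\mathrm{int}\,K_\theta)$, which in the cylinder $\mathbb C/\mathbb Z$ consists of neighborhoods of the two ends plus possibly other pieces; there is no single holomorphic $\widetilde P$ with $\mathcal T(w)=w+\widetilde P(e^{2\pi i w})$ globally, so your maximum-principle argument cannot be run on a full circle $\{|\zeta|=e^{-2\pi b}\}$ unless that circle lies entirely in the domain of $\mathcal T$, which you have not arranged. Even granting translation invariance of $\widehat J$, the constraint ``$\im\mathcal T(w)\in B$ for $w$ in the (partial) domain of $\mathcal T$'' does not obviously force $\mathcal T$ affine: nothing you have said rules out a closed $B$ rich enough to absorb $\im\mathcal T$ on the horns. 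What is really needed is a reason why $\Psi_r^{-1}(J_\theta)$ (which sits inside $\widehat J$) cannot be contained in a set of the form $\mathbb R+iB$ with $B\neq\mathbb R$, and you do not supply one. The paper bypasses all of this with a short local argument: it first proves $L_\sigma^{-1}(J_\theta)\subset\partial\widetilde J_{\theta,\epsilon}\subset L_\sigma^{-1}(K_\theta)$, then picks a pre-parabolic point $\zeta_0\in J_\theta$ and a regular preimage $z_0\in L_{\sigma_0}^{-1}(\zeta_0)$, and uses the $q\ (\ge3)$ external rays landing at $\zeta_0$ to see that the local germ of $L_{\sigma_0+\delta}^{-1}(J_\theta)$ at $z_0$ sweeps transversally across $L_{\sigma_0}^{-1}(\mathbb C\setminus K_\theta)$ as $\delta$ varies; hence $\mathcal J(\sigma_0)\cap\mathcal J(\sigma_0+\delta)=\varnothing$ for all small $\delta\neq0$, so the bad set is discrete.
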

\begin{proof} See Figure~\ref{fig1} for an illustration. Let $\theta$ be a rational number and $\sigma\in\mathbb C$. Let $\epsilon=\{\epsilon_n\}$ be a sequence as in \eqref{EqEpsN}. Passing to a subsequence if necessary we may assume that $$\widetilde J_{\theta,\epsilon}:=\lim J_{\theta+\epsilon_n}$$ exists. Notice that a priory $\widetilde J_{\theta,\epsilon}$ does not need to have an empty interior. Observe that \begin{equation}\label{EqInclPreim}\partial\widetilde J_{\theta,\epsilon}\supset L_\sigma^{-1}(J_\theta)\;\;\text{and}\;\;\widetilde J_{\theta,\epsilon}\subset L_\sigma^{-1}(K_\theta).\end{equation} Indeed, from \eqref{EqRelationsJulias} we have:
$$\widetilde J_{\theta,\epsilon}\supset J_{\theta,\sigma}\supset L_\sigma^{-1}(J_\theta).$$ On the other hand, arbitrarily close to each point of $L_\sigma^{-1}(J_\theta)$ there is a disk $D$ such that  $$f_{\theta+\epsilon_k}^{N_k}(D)\cap K_\theta=\varnothing$$ for sufficiently large $k$. Thus, $$L_\sigma^{-1}(J_\theta)\subset \overline{K_\theta\setminus \widetilde J_{\theta,\epsilon}},$$ which proves \eqref{EqInclPreim}.

Now, restrict our attention to the case $\sigma\in\mathbb R$ and $\epsilon_n\in\mathbb R$.
Fix $\sigma_0\in\mathbb R$. Let $\zeta_0\in L_{\sigma_0}(K_\theta)\cap J_\theta$ be such that $f_\theta^m(\zeta_0)=0$ for some $m\in\mathbb N$. Let $z_0\in \partial\widetilde J_{\theta,\epsilon}$ be such that $L_{\sigma_0}(z_0)=\zeta_0$. Without loss of generality we may assume that
$$DL_{\sigma_0}(z_0)\neq 0\;\;\text{and}\;\;\frac{dL_{\sigma_0+\delta}(z_0)}{d\delta}\neq 0\;\;\text{at}\;\;\delta=0.$$

Assume for simplicity that $\theta=p/q$, ($p\in\mathbb Z,q\in\mathbb N$, $p,q$ are co-prime) with $q\geqslant 3$.  Notice that $f_\theta^m$ sends conformally a neigborhood of  $\zeta_0$ onto a neighborhood of $0$. For each repelling direction $\nu_r^i$ of $f_\theta$ let $\gamma_i$ be the external ray at $\zeta_0$ such that $f_\theta^m(\gamma_i)$ is tangent to $\nu_r^i$. For sufficiently small $\delta_0$ there exists an inverse branch $\phi_\delta$ of $L_{\sigma_0+\delta}$ defined on $U_{\delta_0}(\zeta_0)$ for $|\delta|<\delta_0$ such that $\phi_\delta$ depends analytically on $\delta$. 
Then there exists $0<\delta_1<\delta_0$ such that for $\delta\neq 0,|\delta|<\delta_1$, one has $$\phi_\delta(\cup\gamma_i\cap U_\delta(\zeta_0))\cap \phi_0(J_\theta\cap U_\delta(\zeta_0))\neq\varnothing.$$ Using \eqref{EqInclPreim} we obtain that $\widetilde J_{\theta,\epsilon}\notin \mathcal J(\sigma_0+\delta)$. It follows that $$\mathcal J(\sigma_0)\cap \mathcal J(\sigma_0+\delta)=\varnothing.$$ Since this is true for any $\sigma_0$ the statement of the proposition follows. For $q=1$ and $q=2$ the proof is similar and we leave it to the reader as an exercise.
%
\end{proof}
\noindent
An immediate consequence of Proposition \ref{PropDistLim} is the following:
\begin{Co}\label{CoDistLim} For every rational number $\theta$ there exists a continuum $\Upsilon$ of sequences $\gamma=\{\gamma_i\}_{i\in\mathbb N}$ of Cremer parameters such that $\gamma_i\to \theta$, the limit $\lim J_{\gamma_i}$ exist for every $\gamma\in\Upsilon$ and $\lim J_{\gamma_i}$ are pairwise distinct for $\gamma\in E$.
\end{Co}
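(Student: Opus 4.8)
My plan is to derive Corollary~\ref{CoDistLim} from Proposition~\ref{PropDistLim} by a counting argument over the real parameter $\sigma$, after first checking that the sets $\mathcal J(\sigma)$ are non-empty for every $\sigma\in\mathbb R$.

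First I would verify that $\mathcal J(\sigma)\neq\varnothing$ for all $\sigma\in\mathbb R$. Fix $\sigma$ and $\theta=p/q$. For each $k\in\mathbb N$ the map $\epsilon\mapsto -\pi/(\epsilon q^2)$ is a diffeomorphism of $(0,1/k)$ onto a half-line tending to $-\infty$, so the (open) set of those $\epsilon\in(0,1/k)$ for which $|-\pi/(\epsilon q^2)+N-\sigma|<1/k$ for some $N\in\mathbb Z$ is non-empty. Since Cremer parameters (non-Brjuno irrationals) are dense in $\mathbb R$, I can choose $\epsilon_k$ in this set with $\theta+\epsilon_k$ Cremer; then $\epsilon_k\to 0$, $\Arg(\epsilon_k)=0$, and there are $N_k\in\mathbb Z$ with $-\pi/(\epsilon_k q^2)+N_k\to\sigma$, so $\{\epsilon_k\}$ satisfies~\eqref{EqEpsN}. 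All the $J_{\theta+\epsilon_k}$ lie in a fixed disk, and the space of its compact subsets with the Hausdorff metric is compact, so I may pass to a subsequence — which still satisfies~\eqref{EqEpsN} — along which $J_{\theta+\epsilon_k}$ converges; its limit belongs to $\mathcal J(\sigma)$.

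Next I would form the graph $G$ on vertex set $\mathbb R$ in which distinct $\sigma,\sigma'$ are adjacent precisely when $\mathcal J(\sigma)\cap\mathcal J(\sigma')\neq\varnothing$; Proposition~\ref{PropDistLim} asserts exactly that every vertex of $G$ has at most countable degree. Hence each connected component of $G$ is countable (it is obtained from one vertex by countably many iterations of the neighbour relation), and since $\mathbb R$ is partitioned into these components there must be continuum many of them (a union of fewer than continuum many countable sets cannot exhaust $\mathbb R$). Selecting one vertex from each component produces a set $A\subset\mathbb R$ of cardinality continuum with $\mathcal J(\sigma)\cap\mathcal J(\sigma')=\varnothing$ for all distinct $\sigma,\sigma'\in A$. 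Finally, for each $\sigma\in A$ I pick some $\widetilde J_\sigma\in\mathcal J(\sigma)$ together with a witnessing real sequence $\{\epsilon^{(\sigma)}_i\}$ with $\theta+\epsilon^{(\sigma)}_i$ Cremer, $\epsilon^{(\sigma)}_i\to 0$, and $J_{\theta+\epsilon^{(\sigma)}_i}\to\widetilde J_\sigma$, and set $\gamma^{(\sigma)}_i=\theta+\epsilon^{(\sigma)}_i$. The family $\Upsilon=\{\gamma^{(\sigma)}\}_{\sigma\in A}$ then consists of continuum many sequences of Cremer parameters converging to $\theta$ along which the Julia sets converge; for $\sigma\neq\sigma'$ the limits $\widetilde J_\sigma$ and $\widetilde J_{\sigma'}$ differ, since a common value would lie in $\mathcal J(\sigma)\cap\mathcal J(\sigma')=\varnothing$. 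In particular the sequences $\gamma^{(\sigma)}$ are pairwise distinct, which is the assertion of the corollary (with the evident correction of ``$\gamma\in E$'' to ``$\gamma\in\Upsilon$'').

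I expect the only genuine difficulty to be the non-emptiness step: one must meet the Douady--Lavaurs condition~\eqref{EqEpsN} while insisting that every $\theta+\epsilon_k$ be Cremer. This works because non-Brjuno numbers are dense and because $\epsilon\mapsto -\pi/(\epsilon q^2)$ winds around $\mathbb R/\mathbb Z$ infinitely often as $\epsilon\to 0^+$, so that inside each window $(0,1/k)$ there is a Cremer perturbation whose rescaled phase lands within $1/k$ of $\sigma$. The remaining ingredients — compactness of the hyperspace of compact sets to extract a convergent subsequence of Julia sets, and the elementary cardinal arithmetic passing from ``countable degree'' to ``continuum many components'' — are routine.
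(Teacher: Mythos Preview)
Your argument is correct and is precisely the intended derivation: the paper states the corollary as ``an immediate consequence of Proposition~\ref{PropDistLim}'' without further proof, and what you have written is the natural fleshing-out of that remark---non-emptiness of each $\mathcal J(\sigma)$ via density of non-Brjuno parameters together with compactness of the hyperspace, followed by the obvious cardinality argument extracting continuum many $\sigma$ with pairwise disjoint $\mathcal J(\sigma)$.
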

\section{Constructing Cremer Julia sets of high complexity}
\label{sec:proof}
First, let us prove an auxiliary technical statement.
\begin{Prop}\label{PropCremerSeq} For any oracle TM $M^\phi$, any $\theta_0\in\mathbb T$, any $\epsilon_0>0$ and any $n_0\in\mathbb N$ there exists a Cremer parameter $\theta_1\in U_{\epsilon_0}(\theta_0)$, a number $\epsilon_1>0$ and an integer $n_1>n_0$ such that the following conditions are satisfied:
\begin{itemize} \item[$1)$] $\overline{U_{\epsilon_1}(\theta_1})\subset U_{\epsilon_0}(\theta_0)$;
\item[$2)$] for every $\theta\in U_{\epsilon_1}(\theta_1)$ the polynomial $f_\theta$ has a non-zero periodic point in $U_{2^{-{n_1}}}(0)$;
\item[$3)$] for every $\theta\in U_{\epsilon_1}(\theta_1)$ the Turing Machine $M(n_1)$ does not produce a correct $2^{-{n_1}}$-approximation of $J_\theta$ in time $t(n_1)$.
\end{itemize}
\end{Prop}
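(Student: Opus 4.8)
I would work in a small neighbourhood of a parabolic parameter, using Douady's implosion both to manufacture the small cycle of condition $2)$ and to defeat fast algorithms in condition $3)$. Fix a rational $p/q\in U_{\epsilon_0}(\theta_0)$, say with $\overline{U_\rho(p/q)}\subset U_{\epsilon_0}(\theta_0)$. Conditions $1)$ and $2)$ will cost almost nothing: it suffices that $\overline{U_{\epsilon_1}(\theta_1)}\subset U_{\rho'}(p/q)$ with $\rho'=\rho'(n_1,q)$ small and $\theta_1$ Cremer (Cremer parameters being dense, a suitable one always lies in the ball we construct). Indeed the parabolic fixed point of $f_{p/q}$ at the origin persists as the fixed point $0$ of every $f_\theta$ and, under perturbation, sheds a cycle of period $q$ at distance $\asymp|\theta-p/q|^{1/q}$ from the origin; hence for $\rho'$ small enough in terms of $n_1$ and $q$, every $\theta\in U_{\rho'}(p/q)$ has a nonzero periodic point inside $U_{2^{-n_1}}(0)$. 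So everything is really condition $3)$.

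For $3)$ the key computational observation is that a computation running in time $\le t(n_1)$ reads its oracle to total precision at most $t(n_1)$. Consequently, if $\operatorname{diam}U_{\epsilon_1}(\theta_1)<2^{-t(n_1)}$, a single $\psi:\mn\to\mq+\I\mq$ is simultaneously a legitimate oracle, at every precision $\le t(n_1)$, for all $\theta\in U_{\epsilon_1}(\theta_1)$. Running $M$ on input $n_1$ with this $\psi$: either some computation $M^\psi(n_1,z)$ overruns the bound $t(n_1)$ — then, extending $\psi$ to a genuine oracle for each individual $\theta$ in the ball, the same computation still overruns, so $M(n_1)$ fails everywhere on $U_{\epsilon_1}(\theta_1)$ and $3)$ holds trivially — or all these computations halt in time $\le t(n_1)$ and output one and the same dyadic set $T^{\ast}$, \emph{independently of the choice of} $\theta\in U_{\epsilon_1}(\theta_1)$. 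In the latter case $3)$ becomes the single assertion
$$ d_H\!\left(J_\theta,\,T^{\ast}\right)>2^{-n_1}\qquad\text{for every }\theta\in U_{\epsilon_1}(\theta_1). $$
Thus the task reduces to: \emph{find a ball of radius $<2^{-t(n_1)-1}$ clustered near $p/q$, with Cremer centre, on which $M$'s forced guess $T^{\ast}$ is wrong by more than one pixel for every parameter simultaneously} — and $T^{\ast}$ depends on the ball only through the dyadic box of size $\asymp 2^{-t(n_1)}$ containing it, so there are only finitely many, $\lesssim\rho\,2^{t(n_1)}$, possible guesses.

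The dynamics enters through the instability of $J_\theta$ near $p/q$. By Theorem~\ref{ThDouady} and \eqref{EqRelationsJulias}, picking $\sigma_0\in\mr$ and $w\in J_{p/q,\sigma_0}\setminus J_{p/q}$ (nonempty since $J_{p/q,\sigma_0}\supsetneq J_{p/q}$) and using $J_{p/q,\sigma_0}\subset\liminf_{\theta\to p/q}J_\theta$, one obtains a \emph{uniform jump}: there are $\rho_0>0$ and $d_1=\tfrac12\,\d(w,J_{p/q})>0$ with $d_H(J_\theta,J_{p/q})\ge d_1$ for all $\theta\in U_{\rho_0}(p/q)$. Moreover Corollary~\ref{CoDistLim} supplies, accumulating at $p/q$, parameters whose Julia sets approach each one of a continuum of \emph{pairwise distinct} limit sets (each containing its own $J_{p/q,\sigma}\supsetneq J_{p/q}$). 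Now fix $n_1>n_0$ with $2^{-n_1}<d_1/10$ (and $\rho'=\rho'(n_1,q)$ as above), and run through the finitely many boxes near $p/q$: I claim one of them, $\mathcal B$, has its guess $T^{\ast}_{\mathcal B}$ missing every nearby Julia set by more than $2^{-n_1}$ on a whole sub-ball. Otherwise, in every box the parameters with $d_H(J_\theta,T^{\ast}_{\mathcal B})\le 2^{-n_1}$ are dense in it; feeding this into lower semicontinuity of $\theta\mapsto J_\theta$ (repelling cycles persist) together with upper semicontinuity of $\theta\mapsto K_\theta$ forces the Julia sets over a \emph{full} neighbourhood of $p/q$ into arbitrarily thin neighbourhoods of fixed dyadic sets — which, once $2^{-n_1}\ll d_1$, contradicts the uniform jump and the continuum of distinct limit Julia sets accessible there, via Corollary~\ref{CoDistLim} and Proposition~\ref{PropDistLim}. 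Place $\theta_1$ Cremer in that sub-ball and shrink $\epsilon_1$ to also secure $1)$ and $2)$.

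The point that needs the heavy machinery of \S\ref{sec:lavaurs}, and which I expect to be the real difficulty, is precisely this \emph{uniformity over the ball}: it is not enough that $M(n_1)$ errs at the centre $\theta_1$, it must err at \emph{every} $\theta\in U_{\epsilon_1}(\theta_1)$, and since $\theta\mapsto J_\theta$ need not be continuous at Cremer (or nearby) parameters, a forced guess $T^{\ast}$ far from $J_{\theta_1}$ could still accidentally be close to $J_\theta$ for some other $\theta$ in the ball. Excluding this is exactly what forces the combination of the $2^{-t(n_1)}$-coarseness of fast computations with the quantitative instability statements of \S\ref{sec:lavaurs}; a single-parameter argument will not do. (When $t$ grows so slowly that $t(n_1)<n_1$ the matter is easy: $M(n_1)$ cannot even read $\theta$ to precision $2^{-n_1}$, so the uniform jump $d_1>2^{-n_1}$ already makes $M(n_1)$ fail on a fixed small ball around $p/q$ by the joint-oracle argument above, with no appeal to Corollary~\ref{CoDistLim}.)
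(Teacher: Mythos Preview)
Your overall plan coincides with the paper's: pass to a rational $p/q\in U_{\epsilon_0}(\theta_0)$, use that a computation of length $\le t(n_1)$ can query its oracle only to precision $t(n_1)$ (so a single $\psi$ serves as an oracle for every $\theta$ in a ball of diameter $<2^{-t(n_1)}$), and invoke Corollary~\ref{CoDistLim} to produce two Cremer parameters near $p/q$ that $M(n_1)$ cannot distinguish but whose Julia sets are more than $2\cdot 2^{-n_1}$ apart in Hausdorff distance; the output $S$ of $M^\psi(n_1)$ must then be wrong for at least one of them, which becomes $\theta_1$.

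The gap is in how you pass from ``wrong at $\theta_1$'' to ``wrong on all of $U_{\epsilon_1}(\theta_1)$''. You assert that ``$\theta\mapsto J_\theta$ need not be continuous at Cremer (or nearby) parameters'', and this is exactly the fact you are missing: it \emph{is} continuous at every Cremer parameter. A Cremer quadratic has no bounded Fatou component, so $K_{\theta_1}=J_{\theta_1}$; combining the general lower semicontinuity of $\theta\mapsto J_\theta$ with the upper semicontinuity of $\theta\mapsto K_\theta$ gives Hausdorff continuity of both at $\theta_1$ (this is in \cite{Douady-94} and is what the paper cites). The paper's proof of $3)$ is then one line: since $d_H(S,J_{\theta_1})>2^{-n_1}$ and $\theta_1$ is Cremer, continuity gives $d_H(S,J_\theta)>2^{-n_1}$ for all $\theta$ in a small ball, and $\psi$ is still a legitimate oracle for each such $\theta$. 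Condition $2)$ is obtained the same way: Yoccoz's Small Cycles property gives a nonzero periodic cycle of $f_{\theta_1}$ inside $U_{2^{-n_1}}(0)$, and the Implicit Function Theorem propagates it to the ball. Your parabolic-bifurcation approach to $2)$ also works, but is not needed.

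Because you believed continuity unavailable, you attempt instead a pigeonhole on the finitely many ``boxes'' of size $2^{-t(n_1)}$ against the continuum of limit Julia sets. As written this argument is incomplete. From density of $\{\theta: d_H(J_\theta,T^\ast_{\mathcal B})\le 2^{-n_1}\}$ in each box, your semicontinuity step yields only the one-sided inclusion $J_\theta\subset U_{2^{-n_1}}(T^\ast_{\mathcal B(\theta)})$ for all $\theta$, not a two-sided Hausdorff bound (the other direction would need $K_\theta=J_\theta$, which fails at the dense set of Siegel and hyperbolic parameters in the ball). And uncountably many pairwise distinct limit sets can perfectly well lie in a common $2^{-n_1}$-neighbourhood of one fixed dyadic $T^\ast$, so neither the ``uniform jump'' nor the continuum from Corollary~\ref{CoDistLim} yields the claimed contradiction without further work. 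The detour may be repairable, but the continuity of $J_\theta$ at Cremer parameters makes it unnecessary.
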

\begin{proof}[Proof of Proposition \ref{PropCremerSeq}]
Since rational numbers are dense in $\mathbb T$ without loss of generality we may assume that $\theta_0$ is rational. Corollary \ref{CoDistLim} implies that there exist two sequences of Cremer parameters $\{\gamma^1_j\},\{\gamma^2_j\}$  convergent to $\theta_0$ such that the limits of the Julia sets $$J^s:=\lim\limits_{j\to\infty} J_{\gamma^s_j},\;\; s=1,2,$$ exist with respect to the Hausdorff metric and are distinct. Choose $N$ such that $d_H(J^1,J^2)>2^{-N}$. Set $n_{1}=\max\{N,n_0+1\}$. Choose $j$ sufficiently large so that
\begin{itemize}
\item{} $\gamma^1_j,\gamma^2_j\in U_{\epsilon_0}(\theta_0)$;
\item{} $d_H(J_{\gamma^1_j},J_{\gamma^2_j})>2^{1-N}$;
 \item{} the first $t(n_{0})$ dyadic digits of $\gamma^1_j$ and $\gamma^2_j$ coincide.
 \end{itemize}
 There are two possibilities.
 \vskip 0.2cm
 \noindent
 $a)$ The Turing Machine $M$ given the input $n_{1}$ and an oracle for $\theta=\gamma^s_j$ for $s=1$ or $2$ either runs for longer than $t(n_{1})$ time units or does not produce a finite set of complex numbers. Then we set $\theta_{1}=\gamma^s_j$.
 \vskip 0.2cm
 \noindent
 $b)$ Otherwise, the Turing Machine $M$ with an input $n_{1}$ in time $t(n_{1})$ is not able to distinguish between $\gamma^1_j$ and $\gamma^2_j$, therefore produces the same collection of complex points $S$ for these two values. Choose $s\in\{1,2\}$ such that $S$  does not coincide with a $2^{-n_{1}}$ approximation of $J_{\gamma_j^s}$. Set $\theta_{1}=\gamma^s_j$.

 Further, since the correspondence $$\theta\mapsto J_\theta$$ is continuous at Cremer parameters with respect to the Hausdorff metric \cite{Douady-94}, for sufficiently small $\epsilon_{1}$ for any $\theta\in U_{\epsilon_{1}}(\theta_{1})$ the Turing Machine $M_{1}(n_{1})$ does not produce a correct $2^{-n_{1}}$ approximation of $J_\theta$. By the Small Cycles Property, $f_{\theta_{1}}$ has a periodic cycle in the punctured $2^{-n_{1}}$-neighborhood of the origin. From the Implicit Function Theorem it follows that for sufficiently small  $\epsilon_{1}$ the condition $2)$ holds. Finally, to satisfy $1)$ we make $\epsilon_{1}$ smaller if necessary.
\end{proof}
Now we a ready to prove Theorem \ref{ThMain}.
Let $\mathfrak M$ be the set of all Turing Machines. Notice that $\mathfrak M$ is countable. Fix a sequence $\{M_i\}_{i\geqslant 1}$ of Turing Machines such that each $M\in\mathfrak M$ appears in this sequence infinitely many times. Using Proposition \ref{PropCremerSeq} we construct a countable collection $\Omega_1$ of triples $(\theta_1,\epsilon_1,n_1)$, where $\theta_1\in\mathbb T$ is rational, $\epsilon_1>0$ and $n_1\in\mathbb N$, such that
\begin{itemize}
 \item[$1)$] the sets $U_{\epsilon_1}(\theta_1)$ are pairwise disjoint for $(\theta_1,\epsilon_1,n_1)\in\Omega_1$ and their union is dense in $\mathbb T$;
 \item[$2)$] given $(\theta_1,\epsilon_1,n_1)\in\Omega_1$ for every $\theta\in U_{\epsilon_1}(\theta_1)$  the polynomial $f_\theta$ has a non-zero periodic point in $U_{2^{-{n_1}}}(0)$ and the Turing Machine $M_1(n_1)$ does not produce a correct $2^{-{n_1}}$-approximation of $J_\theta$ in time $t(n_1)$.
\end{itemize}
Moreover, using Proposition \ref{PropCremerSeq} by induction we construct a sequence $\{\Omega_i\}$, where $\Omega_i$ is a collection of triples $(\theta_i,\epsilon_i,n_i)$ satisfying  the conditions for $\Omega_1$ with $M_1$ replaced by $M_i$ and, in addition, the following conditions:
\begin{itemize}
\item[$3)$] for $i\in\mathbb N$ if $(\theta_i,\epsilon_i,n_i)\in\Omega_i$ and $(\theta_{i+1},\epsilon_{i+1},n_{i+1})\in\Omega_{i+1}$ then either $U_{\epsilon_i}(\theta_i)\cap U_{\epsilon_{i+1}}(\theta_{i+1})=\varnothing$ or $U_{\epsilon_i}(\theta_i)\supset U_{\epsilon_{i+1}}(\theta_{i+1})$;
\item[$4)$] if $U_{\epsilon_i}(\theta_i)\supset U_{\epsilon_{i+1}}(\theta_{i+1})$ then $n_{i+1}>n_i$.
\end{itemize}

Let $A_i$ be the union of the sets $U_{\epsilon_i}(\theta_i)$ over triples $(\theta_i,\epsilon_i,n_i)\in\Omega_i$. Then $A=\cap A_i$ is a dense $G_\delta$ subset of $\mathbb T$. Let $\theta_\infty\in A$. Then for every $i\in\mathbb N$ there exists a unique  $(\theta_i,\epsilon_i,n_i)\in\Omega_i$ such that $\theta_\infty\in U_{\epsilon_i}(\theta_i)$. By the condition $2)$, the polynomial $f_{\theta_\infty}$ has a small cycle property, therefore, $\theta_\infty$ is a Cremer parameter. Moreover, for every Turing Machine $M$ there exists infinitely many positive integers $n$ such that $M$ does not produce a correct $2^{-n}$-approximation of $J_{\theta_\infty}$ in time $t(n)$. This finishes the proof of Theorem \ref{ThMain}.

\end{document}